\newtheorem{theorem}{theorem}[section]
\newtheorem{lemma}[theorem]{Lemma}
\newtheorem{corollary}[theorem]{Corollary}
\newtheorem{example}[theorem]{Example}
\newtheorem{definition}[theorem]{Definition}
\newenvironment{Proof*}{{\it Proof.}}
\newcommand{\NN}{\mathbb{N}}
\newcommand{\CC}{\mathbb{C}}
\newcommand{\RR}{\mathbb{R}}
\newcommand{\ZZ}{\mathbb{Z}}
\newcommand{\BB}{\mathcal{B}}
\newcommand{\mm}{\mathcal{M}}
\newcommand{\DEF}[1]{\emph{#1}}
    \definecolor{david}{rgb}{.2,.8,.4}
    \definecolor{polona}{rgb}{.5,.5,.2}
   \definecolor{todo}{rgb}{1,.2,.2}
\begin{document}

\title{Cholesky decomposition of matrices over commutative semirings}
\author{David Dol\v zan, Polona Oblak}
\date{\today}

\address{D.~Dol\v zan:~Department of Mathematics, Faculty of Mathematics
and Physics, University of Ljubljana, Jadranska 21, SI-1000 Ljubljana, Slovenia; e-mail: 
david.dolzan@fmf.uni-lj.si}
\address{P.~Oblak: Faculty of Computer and Information Science, University of Ljubljana,
Ve\v cna pot 113, SI-1000 Ljubljana, Slovenia; e-mail: polona.oblak@fri.uni-lj.si}

 \subjclass[2010]{16Y60, 15A09, 65F05}
 \keywords{Semiring, positive semidefiniteness, Cholesky decomposition}
   \thanks{The authors acknowledge the financial support from the Slovenian Research Agency  (research core funding No. P1-0222)}

\bigskip

\begin{abstract} 
We prove that over a commutative semiring  every symmetric strongly invertible  matrix  with nonnegative numerical range has a Cholesky  decomposition. 
\end{abstract}

\maketitle 

\section{Introduction}

The Cholesky decomposition $A = LL^*$ of a positive semidefinite matrix $A$ over the field of  complex numbers, where $L$ is a lower triangular matrix, is one of the fundamental tools in matrix computations. The standard algorithm for its computation dates from the beginning of the 
previous century and it is one of the most numerically stable matrix factorization algorithms. The
Cholesky decomposition exists for every positive semidefinite matrix. When
assumed $A$ is positive definite and $L$ has all diagonal entries positive, the Cholesky decomposition $A=LL^T$ is unique. 

However, not much is known about the Cholesky decomposition of matrices over semirings. The theory of semirings has many applications in optimization theory, automatic control, models of discrete event networks and graph theory (see e.g. \cite{baccelli1998ergodic,cuninghame2012minimax, li2014heuristic,zhao2010invertible}).

The theory of invertible  matrices over semirings differs from those over complex numbers. The invertible matrices over semirings with 
no non-zero additively invertible elements were characterized in \cite{dolvzan2009invertible}. 
We follow  \cite{MR1473204} and say that a matrix is \DEF{strongly invertible} if all its leading principal submatrices  are invertible. Strongly invertible matrices over semirings were first 
investigated in \cite{tan2017strongly}. The author showed that a matrix over a semiring is strongly invertible 
if and only if it has an LU decomposition, where $L$ and $U$ are both invertible and the diagonal entries of $U$ are invertible elements of the semiring. When applicable over reals, the Cholesky decomposition is roughly twice as efficient as the $LU$ decomposition for solving systems of linear equations (see for example \cite{MR2978290}).

 In this paper, we study the Cholesky decomposition of strongly
invertible matrices over semirings.
In order to construct the Cholesky decomposition, we define the notion of a matrix with a nonnegative numerical range. 
Since positive elements in semirings are not well defined,
we substitute the real case assumption of  $x^TMx$ to be  a positive element with the 
assumption of  $x^TMx$ to be  a square. We show that every strongly invertible 
matrix $M$ with a nonnegative numerical range has a Cholesky
decomposition $M=LL^T$ and $L$ is a symmetric strongly invertible matrix (see Theorem \ref{thm:Ch}). We also show that not every 
symmetric invertible (but not necessarily strongly invertible) matrix with a nonnegative numerical range has a Cholesky decomposition 
(see Example \ref{ex:Z2B}). However, for semirings where sums of squares are squares we prove 
 that a symmetric strongly invertible matrix $M$ has  nonnegative numerical range if and only if it is positive semidefinite, and this holds if and only if it $M$  has a Cholesky decomposition $M=LL^T$, where $L$ is strongly invertible lower triangular matrix (see Corollary \ref{cor:Ch}).

\section{Preliminaries}

A \emph{semiring} is a set $S$ equipped with binary operations $+$ and $\cdot$ such that $(S,+)$ is a commutative monoid with identity element 0, and $(S,\cdot)$ is a monoid with identity element 1. 
In addition, operations $+$ and $\cdot$ are connected by distributivity and 0 annihilates $S$. A semiring is 
\emph{commutative} if $ab=ba$ for all $a,b \in S$.

Throughout the paper we assume that $S$ is a commutative semiring.

The simplest example of a commutative semiring  is the \DEF{binary Boolean semiring} $\BB$, the set $\{0,1\}$
in which $1+1=1\cdot1=1$. Some other examples of  semirings include set of nonnegative integers (or reals) with the usual operations of addition and multiplication, distributive lattices, tropical 
semirings, dio\"{\i}ds, fuzzy algebras, inclines and bottleneck algebras. (See e.g. \cite{MR2389137}.) 

An element $a \in S$ is called \DEF{invertible} if there exists an element $b \in S$ such that $ab = 1$. 
Such an element $b$ is called the \DEF{inverse} of $a$ in $S$ and denoted by $a^{-1}$. It is easily proved that the inverse of $a$ in $S$ is unique. Let $U(S)$ denote the set of all invertible elements in $S$. 

An element $a \in S$ is called \DEF{additively invertible} if $a + b = 0$ for some element
$b$ in $S$. Such an element $b$ is  unique and denoted by $-a$. Let $V(S)$ denote the set of all additively invertible elements in $S$. It is clear that $V(S) = S$ if and only if $S$ is a ring and that $V(S) = \{0\}$ if and only if $0$ is the
only additively invertible element in $S$. 

For a subset $T \subseteq S$,  let $T^n$ denote the set of all vectors of size $n$ over $T$.
We denote by  $\mm_{n}(S)$ the set of all $n \times n$ matrices over $S$. For $M \in \mm_{n}(S)$, we denote by 
$M_{ij}$ the $(i, j)$-entry of $M$, and denote by $M^T$ the transpose of $M$. 
It is easy to see that  $\mm_n(S)$, $n \geq 2$, forms a noncommutative semiring with respect to the matrix addition and the
matrix multiplication. We denote the $m \times n$ zero matrix by $0_{m,n}$  and the $n \times n$ identity matrix by $I_n$.
If the size of a matrix is clear from the context, we omit the subscript denoting it.

\bigskip

Over the complex numbers, a Hermitian matrix $M \in \mm_n(\CC)$ has the Cholesky decomposition
$M=LL^*$ for a lower triangular matrix $L$ if and only if $M$ is positive semidefinite. 
Every positive semidefinite Hermitian matrix 
$M$ has a nonnegative numerical range, i.e.
$ x^* M x \geq 0$ for all $x\in \CC^n$. 
Since the notion of a nonnegative numerical range is defined by utilizing the concept of positive real numbers, we need to introduce
a similar concept to the commutative semiring setting. Therefore, we define the set of squares of a commutative semiring $S$ as
$$Q(S)=\{a\in S; \;a=b^2 \text{ for some } b \in S\}.$$
For $S=\RR$, the set $Q(S)$ coincides with nonnegative real numbers. Now, we can  
define the following.

\begin{definition}
The matrix $M \in \mm_n(S)$ has a \emph{nonnegative numerical range} if 
$x^TMx \in Q(S)$ for all $x \in S^n$.
\end{definition}

We use the following notion of positive semidefiniteness of matrices over semirings, which was defined in \cite{MR3479382,mohindru2014completely}.

\begin{definition}
The symmetric matrix $M \in \mm_n(S)$ is \emph{positive semidefinite} if there exists a matrix $B$ 
such that $M=BB^{T}$.
\end{definition}

Note that the above  two notions do not coincide for symmetric matrices over semirings. For example, it is easy to see that  matrix
$$M=\left[
 \begin{matrix}
  (1,0)&(0,1)\\
  (0,1)&(1,0)
 \end{matrix}
 \right] \in \mm_2(S)$$ over  $S=\ZZ_2 \times \BB$, where $\BB$ is the Boolean semiring, is not
 positive semidefinite, but has a nonnegative numerical range since $Q(S)=S$. (See also Example \ref{ex:Z2B}.)
 On the other hand, the identity matrix over nonnegative integers $\NN$ is positive semidefinite, but does not have a nonnegative numerical range.

 \bigskip

The construction of the Cholesky decomposition will utilize the notion of the Schur complement of a matrix. 
For $M=\left[
 \begin{matrix}
 A &B\\
 C &D
 \end{matrix}
 \right]
  \in \mm_n(\RR)$, the Schur complement of an  invertible submatrix $A$ of $M$ is defined as
  $M/A=D-CA^{-1}B$. 
  For the theory of the Schur complement over real matrices, we refer the reader to  \cite{MR2160825}.  Since  subtraction is generally not possible in commutative semirings, 
we shall only define the Schur complement of  the leading $1\times 1$ submatrix of $M$.

\begin{definition}
Let $$M=\left[
 \begin{matrix}
 a &b^T\\
 b &C
 \end{matrix}
 \right]
  \in \mm_n(S)$$ where $a \in U(S)$, $b \in V(S)^{n-1}$  and $C\in \mm_{n-1}(S)$. We define 
  $$M/a=C+a^{-1}(-b)b^T$$
  to be the \emph{Schur complement} of  $a$ of matrix $M$. 
\end{definition}

The next technical lemma is straightforward.
\begin{lemma}\label{toosimple}
If $a \in U(S)$, $b  \in V(S)^{n-1}$ and $C \in \mm_{n-1}(S)$, then the matrix
$\left[
 \begin{matrix}
 1 &0\\
 a^{-1}b &I
 \end{matrix}
 \right]$ is invertible and
$$
 \left[
 \begin{matrix}
 a &0\\
 0 &M/a
 \end{matrix}
 \right]=\left[
 \begin{matrix}
 1 &0\\
 a^{-1}(-b) &I
 \end{matrix}
 \right]
 \left[
 \begin{matrix}
 a &b^T\\
 b &C
 \end{matrix}
 \right]
 \left[
 \begin{matrix}
 1 &0\\
 a^{-1}(-b) &I
 \end{matrix}
 \right]^T.$$
 \end{lemma}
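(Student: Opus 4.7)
The plan is to verify both assertions by direct block-matrix computation, being careful only about where additive inverses are genuinely available.

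For invertibility, I would produce the explicit two-sided inverse. Since $b \in V(S)^{n-1}$, each coordinate of $b$ has an additive inverse, so $-b \in V(S)^{n-1}$ and hence $a^{-1}(-b) \in V(S)^{n-1}$ is well defined with $a^{-1}b + a^{-1}(-b) = a^{-1}(b + (-b)) = a^{-1}\cdot 0 = 0$. Multiplying
$$\begin{bmatrix} 1 & 0 \\ a^{-1}b & I \end{bmatrix} \begin{bmatrix} 1 & 0 \\ a^{-1}(-b) & I \end{bmatrix} = \begin{bmatrix} 1 & 0 \\ a^{-1}b + a^{-1}(-b) & I \end{bmatrix} = I,$$
and symmetrically in the other order, shows that the two matrices are mutually inverse. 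In particular the stated matrix is invertible.

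For the factorization, I would compute the right-hand side in two stages. First, using commutativity of $S$ (so that $a^{-1}(-b)\cdot a = (-b)$),
$$\begin{bmatrix} 1 & 0 \\ a^{-1}(-b) & I \end{bmatrix} \begin{bmatrix} a & b^T \\ b & C \end{bmatrix} = \begin{bmatrix} a & b^T \\ (-b) + b & a^{-1}(-b)b^T + C \end{bmatrix} = \begin{bmatrix} a & b^T \\ 0 & M/a \end{bmatrix},$$
where the bottom-right block is exactly $M/a$ by the definition of the Schur complement. Then multiplying by the transpose of the first factor,
$$\begin{bmatrix} a & b^T \\ 0 & M/a \end{bmatrix} \begin{bmatrix} 1 & (a^{-1}(-b))^T \\ 0 & I \end{bmatrix} = \begin{bmatrix} a & a(a^{-1}(-b))^T + b^T \\ 0 & M/a \end{bmatrix},$$
and since $a(a^{-1}(-b))^T = (-b)^T = -b^T$, the upper right block becomes $-b^T + b^T = 0$, yielding the claimed block-diagonal matrix.

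There is essentially no obstacle beyond bookkeeping: the two things one must not do over a semiring are to subtract freely or to permute non-commuting factors, and the hypotheses $b \in V(S)^{n-1}$ and $S$ commutative are precisely what legitimize each cancellation and each reordering used above.
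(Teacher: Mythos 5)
Your proof is correct and follows the same route as the paper: exhibit $\left[\begin{smallmatrix} 1 & 0\\ a^{-1}(-b) & I\end{smallmatrix}\right]$ as the explicit two-sided inverse, then verify the factorization by direct block multiplication (which the paper dismisses as a ``straightforward calculation'' and you have simply written out). No gaps.
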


\begin{proof} 
Observe that
 $\left[
 \begin{matrix}
 1 &0\\
 a^{-1}b &I
 \end{matrix}
 \right]^{-1}=
\left[
 \begin{matrix}
 1 &0\\
 a^{-1}(-b) &I
 \end{matrix}
 \right]$. The second statement of the lemma is a straightforward calculation.
 \end{proof}
 
The next lemma shows that the Schur complement preserves the nonnegative numerical range.

\begin{lemma}\label{pd}
If $a \in U(S)$, $b  \in V(S)^{n-1}$, $C \in \mm_{n-1}(S)$ and $$M=\left[
 \begin{matrix}
 a &b^T\\
 b &C
 \end{matrix}
 \right]
  \in \mm_n(S)$$ has a nonnegative numerical range, then $a$ and $M/a$ both have 
  nonnegative numerical range.
\end{lemma}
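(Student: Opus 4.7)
The plan is to leverage the congruence identity from Lemma~\ref{toosimple} and reduce the claim to a suitable choice of test vector. Concretely, set
$$P=\left[\begin{matrix} 1 & 0 \\ a^{-1}(-b) & I \end{matrix}\right],$$
so that Lemma~\ref{toosimple} gives
$$\left[\begin{matrix} a & 0 \\ 0 & M/a \end{matrix}\right] = P\, M\, P^T.$$
The crucial observation is that $P$ has all entries in $S$: this is where the hypothesis $b\in V(S)^{n-1}$ is used, since it guarantees that $-b$, and hence $a^{-1}(-b)$, lives in $S^{n-1}$ rather than merely in some enveloping ring.

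Next, for an arbitrary vector $z=\left[\begin{matrix} z_1 \\ y\end{matrix}\right]\in S^n$ with $z_1\in S$ and $y\in S^{n-1}$, I would compute
$$z^T\!\left[\begin{matrix} a & 0 \\ 0 & M/a\end{matrix}\right]\!z \;=\; a\,z_1^2 + y^T(M/a)y,$$
and at the same time rewrite the left-hand side using the congruence as $(P^Tz)^T M (P^Tz)$. Since $P^Tz\in S^n$, the assumption that $M$ has nonnegative numerical range forces
$$a\,z_1^2 + y^T(M/a)y \;\in\; Q(S)$$
for every choice of $z_1\in S$ and $y\in S^{n-1}$.

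From this single identity the lemma drops out by specialization: taking $y=0$ shows that $a\,z_1^2\in Q(S)$ for every $z_1\in S$, which is exactly the statement that the $1\times 1$ matrix $[a]$ has nonnegative numerical range; and taking $z_1=0$ shows that $y^T(M/a)y\in Q(S)$ for every $y\in S^{n-1}$, so that $M/a$ has nonnegative numerical range as well.

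There is no computational obstacle; the only conceptual point to watch is the passage from $Q(S)\ni a z_1^2 + y^T(M/a) y$ to the two separate conclusions. Over a semiring one cannot subtract a square of something from a square of something else, but here I do not need to: the splitting is obtained by evaluating at the two coordinate axes $y=0$ and $z_1=0$ independently, which sidesteps any use of additive inverses outside of $V(S)^{n-1}$.
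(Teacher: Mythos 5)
Your proof is correct and follows essentially the same route as the paper: both arguments rest on the congruence $\mathrm{diag}(a,\,M/a)=PMP^T$ from Lemma~\ref{toosimple} and then test $M$ against the vectors $P^Tz$ for $z$ supported on the first coordinate (giving $a\in Q(S)$) or on the remaining coordinates (giving $y^T(M/a)y\in Q(S)$). The only cosmetic difference is that you first write down the identity for a general $z=[z_1\; y^T]^T$ and then specialize, whereas the paper picks the two test vectors directly.
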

\begin{proof}
 Suppose that $M$ has a nonnegative numerical range. Choose $e=[1 \; 0 \ldots 0]^T\in S^n$ and an arbitrary $y \in S^{n-1}$, and observe that 
 $a=e^TMe\in Q(S)$ and
 $$y^T (M/a) y= [0 \; y^T] \left[
 \begin{matrix}
 a &0\\
 0 &M/a
 \end{matrix}
 \right]\left[
 \begin{matrix}
 0\\
 y
 \end{matrix}
 \right].$$
 Define $$x=\left[
 \begin{matrix}
 1 &a^{-1}(-b)^T\\
 0 &I
 \end{matrix}
 \right] \left[
 \begin{matrix}
 0\\
 y
 \end{matrix}
 \right]$$
 and observe that by Lemma \ref{toosimple}, $y^T (M/a) y=x^TMx \in Q(S)$.
%
\end{proof}

\section{Cholesky decomposition}

In this section, we investigate the necessary conditions for a matrix to have a Cholesky decomposition.
Note that not every strongly invertible positive semidefinite matrix over a semiring has a Cholesky decomposition, 
for example
$\left[
 \begin{matrix}
  5 & 2\\
  2 & 1
 \end{matrix}
 \right] \in \mm_2(\ZZ_6)$.
In the next example, we also demonstrate that not every symmetric  invertible matrix with a nonnegative numerical 
range has 
a Cholesky decomposition.

\begin{example}\label{ex:Z2B}
  Let $S=\ZZ_2 \times \BB$, where $\BB$ is the Boolean semiring, and $$M=\left[
 \begin{matrix}
  (1,0)&(0,1)\\
  (0,1)&(1,0)
 \end{matrix}
 \right] \in \mm_2(S).$$  Note that $M^2=I$, so $M$ is invertible. 
 Observe that $Q(S)=S$ and thus $M$ has a nonnegative numerical range, but 
 one can easily see that there does not exist a lower triangular matrix $L$ such that $LL^T=M$. 
 \end{example}

We will therefore use a stronger condition than invertibility, namely the strong invertibility. 
\begin{definition}
The matrix 
$A \in \mm_n(S)$ is \DEF{strongly invertible} if all the leading principal submatrices of $A$ are invertible.
\end{definition}
Note that the matrix $M$ in Example \ref{ex:Z2B} is invertible but not strongly invertible.

The main result of this paper will show that every  symmetric strongly invertible matrix with a nonnegative numerical range  has a Cholesky decomposition $A=LL^T$, which is unique up to a right sided multiplication 
of $L$ by a diagonal matrix  $D$ such that $D^2=I$.

The next lemma proves that every strongly invertible matrix has a Schur complement. It
is a straightforward corollary of \cite[Lemma~2.4]{tan2017strongly}.

\begin{lemma}\label{tan}
 Let $a \in S$, $b  \in S^{n-1}$ and $C \in \mm_{n-1}(S)$. If $M=\left[
 \begin{matrix}
 a &b^T\\
 b &C
 \end{matrix}
 \right]$  is a strongly invertible matrix, then $b$ is additively invertible. 
 Hence, if $M$ is strongly invertible, then  there exists the Schur complement $M/a$.
\end{lemma}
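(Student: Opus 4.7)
The plan is to establish the statement as a direct consequence of \cite[Lemma~2.4]{tan2017strongly}. For the Schur complement $M/a = C + a^{-1}(-b)b^T$ to be defined (per the definition preceding the lemma), two conditions must hold: $a \in U(S)$ and $b \in V(S)^{n-1}$. The first is an immediate consequence of strong invertibility, while the second is exactly the content that Tan's lemma provides.

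For the first condition, strong invertibility of $M$ forces every leading principal submatrix to be invertible, and in particular the $1 \times 1$ submatrix $[a]$ is invertible, so $a \in U(S)$.

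For the second condition, I would invoke \cite[Lemma~2.4]{tan2017strongly} directly. To expose the underlying mechanism, one can alternatively argue via the LU characterization of strong invertibility proved in the same paper: writing $M = LU$ with $L$ invertible lower triangular and $U$ upper triangular having invertible diagonal, and partitioning conformally as
$$L = \begin{bmatrix} \ell_0 & 0 \\ \ell & L' \end{bmatrix}, \qquad U = \begin{bmatrix} u_0 & v^T \\ 0 & U' \end{bmatrix},$$
block multiplication gives $b = \ell u_0$. Invertibility of the lower triangular matrix $L$ forces its strictly below-diagonal entries to be additively invertible (otherwise no inverse can exist), so in particular $\ell \in V(S)^{n-1}$. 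Since $V(S)$ is closed under multiplication by arbitrary elements of $S$, it follows that $b = \ell u_0 \in V(S)^{n-1}$. Combining both conditions shows that $M/a$ exists.

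The main obstacle is negligible: beyond correctly recalling Tan's result, all that remains is an observation about the $1 \times 1$ leading principal submatrix together with a block multiplication, so no genuine technical difficulty arises.
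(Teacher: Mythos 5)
Your proposal is correct and matches the paper, which gives no separate proof of this lemma beyond observing that it is a straightforward corollary of \cite[Lemma~2.4]{tan2017strongly}: the invertibility of the $1\times 1$ leading principal submatrix gives $a\in U(S)$, and Tan's lemma gives $b\in V(S)^{n-1}$, which together are exactly what the definition of $M/a$ requires. Your supplementary derivation via the $LU$ characterization (using that an invertible lower triangular matrix over a commutative semiring has additively invertible subdiagonal entries, and that $V(S)$ is an ideal under multiplication) is a sound elaboration of the mechanism behind Tan's result, though the paper does not spell it out.
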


Next, we prove that the set of strongly invertible matrices is invariant under taking the Schur complement.

\begin{lemma}\label{fromd}
  Let $a \in S$, $b  \in S^{n-1}$ and $C \in \mm_{n-1}(S)$. If $M=\left[
 \begin{matrix}
 a &b^T\\
 b &C
 \end{matrix}
 \right]$  is strongly invertible, then $M/a$ is strongly invertible.
\end{lemma}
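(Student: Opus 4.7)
The plan is to show that each leading $k\times k$ principal submatrix of $M/a$ is itself a Schur complement of some leading principal submatrix of $M$, and then invoke Lemma~\ref{toosimple} to deduce invertibility.

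First I would set up notation. For $k=1,\ldots,n-1$, let $M_{k+1}$ denote the leading $(k+1)\times(k+1)$ principal submatrix of $M$, and write
$$M_{k+1}=\left[\begin{matrix} a & b_k^T \\ b_k & C_k \end{matrix}\right],$$
where $b_k$ consists of the first $k$ entries of $b$ and $C_k$ is the leading $k\times k$ principal submatrix of $C$. Since $M$ is strongly invertible, so is $M_{k+1}$, and in particular the leading $1\times 1$ submatrix $[a]$ is invertible, so $a\in U(S)$; by Lemma~\ref{tan}, $b$ (and hence each $b_k$) lies in $V(S)^{k}$. A direct calculation shows that the leading $k\times k$ principal submatrix of $M/a=C+a^{-1}(-b)b^T$ is precisely $C_k+a^{-1}(-b_k)b_k^T = M_{k+1}/a$.

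Next I would apply Lemma~\ref{toosimple} to $M_{k+1}$: it produces invertible triangular matrices $T_k=\left[\begin{smallmatrix} 1 & 0 \\ a^{-1}(-b_k) & I\end{smallmatrix}\right]$ such that
$$\left[\begin{matrix} a & 0 \\ 0 & M_{k+1}/a \end{matrix}\right]=T_k\,M_{k+1}\,T_k^T.$$
Since $T_k$, $T_k^T$ and $M_{k+1}$ are all invertible, the block-diagonal matrix on the left is invertible. A short verification (multiplying out the putative inverse block by block and using that $a\in U(S)$) shows that invertibility of $\operatorname{diag}(a,\,M_{k+1}/a)$ together with invertibility of $a$ forces $M_{k+1}/a$ to be invertible. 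Combining with the first step, every leading principal submatrix of $M/a$ is invertible, so $M/a$ is strongly invertible.

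The main subtlety will be the last implication: over a semiring we cannot simply take determinants, so to get invertibility of $M_{k+1}/a$ from invertibility of the block-diagonal matrix one must argue directly from the defining equations of a two-sided inverse. This is routine but is the only non-bookkeeping step. The identification of the leading $k\times k$ submatrix of $M/a$ with $M_{k+1}/a$ is immediate from the formula defining the Schur complement, since the rank-one correction $a^{-1}(-b)b^T$ restricted to the first $k$ rows and columns equals $a^{-1}(-b_k)b_k^T$.
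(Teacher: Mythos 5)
Your proposal is correct and follows essentially the same route as the paper: identify the leading $k\times k$ principal submatrix of $M/a$ with the Schur complement $M_{k+1}/a$ of the leading $(k+1)\times(k+1)$ principal submatrix of $M$, then use Lemma~\ref{toosimple} to write $\mathrm{diag}(a, M_{k+1}/a)$ as a product of invertible matrices and extract invertibility of the block. Your extra care in justifying that invertibility of the block-diagonal matrix forces invertibility of $M_{k+1}/a$ (by examining the block equations for a two-sided inverse rather than taking determinants) is a detail the paper glosses over, but it is the same argument.
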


\begin{proof}
 Choose $k$, $1 \leq k \leq n-1$, and let ${M}_k$ be the leading principal submatrix of $M/a$ of size $k$ and 
 $\hat{M}_k$  the leading principal submatrix of $M$ of size $k+1$. Observe that 
 $\hat{M}_k=\left[
 \begin{matrix}
 a &\hat{b}^T\\
 \hat{b} & \hat{C}
 \end{matrix}
 \right],$ where $\hat{b} \in S^{k-1}$ consists of the first $k-1$ components of the vector $b$ and 
 $\hat{C} \in \mm_{k-1}(S)$ is the leading principal submatrix of $C$.
Furthermore, $ {M}_k$ is the Schur complement of $\hat{M}_k$.
Since $M$ is strongly invertible, we conclude that the matrix $\hat{M}_k$ is invertible, and thus by Lemma \ref{toosimple} it follows that
$\left[
 \begin{matrix}
 a &0\\
 0 &\hat{M}_k/a
 \end{matrix}
 \right]$
 is invertible. This implies that $\hat{M}_k \in U(\mm_{k}(S))$, hence $M/a$ is strongly invertible.
 \end{proof}

\begin{theorem}[Cholesky decomposition]\label{thm:Ch}
 Let $M\in \mm_n(S)$ be a symmetric  strongly invertible matrix with a nonnegative numerical range. 
 Then $M=LL^T$ for a strongly invertible lower triangular
 matrix $L \in \mm_n(S)$.
 
 Furthermore, the above decomposition $M=LL^T$ is unique up to right sided multiplication of $L$ by a diagonal matrix  $D \in \mm_n(S)$ such that $D^2=I$.
\end{theorem}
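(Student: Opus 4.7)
The plan is to prove existence and uniqueness simultaneously by induction on $n$, following the classical block recursion $M \mapsto M/a$ and invoking the preliminary lemmas to dispatch the semiring subtleties. In the base case $n=1$, the matrix $M=[a]$ satisfies $a = e_1^T M e_1 \in Q(S)$, so $a = \alpha^2$ for some $\alpha \in S$; since $a \in U(S)$ by strong invertibility, $\alpha$ is also invertible, and $L=[\alpha]$ works. Any other factorization $[\tilde\alpha]$ with $\tilde\alpha^2 = \alpha^2$ yields $\tilde\alpha = \alpha d$, where $d := \tilde\alpha\alpha^{-1}$ satisfies $d^2=1$, establishing uniqueness in this case.

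For the inductive step, partition $M=\begin{pmatrix}a & b^T \\ b & C\end{pmatrix}$ and extract $\alpha \in U(S)$ with $\alpha^2=a$ as above. Lemma \ref{tan} ensures $b \in V(S)^{n-1}$, so $M/a$ is defined; Lemmas \ref{pd} and \ref{fromd} give that $M/a$ is strongly invertible with nonnegative numerical range; and symmetry of $M/a$ follows from commutativity of $S$, which makes $(-b)b^T$ symmetric. The inductive hypothesis produces a strongly invertible lower triangular $L' \in \mm_{n-1}(S)$ with $L'L'^T = M/a$. Set $L := \begin{pmatrix}\alpha & 0 \\ \alpha^{-1}b & L'\end{pmatrix}$. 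A block multiplication shows $LL^T=M$: only the $(2,2)$-block is nontrivial, where $\alpha^{-2}bb^T + L'L'^T$ collapses to $C$ via the cancellation $\alpha^{-2}bb^T + \alpha^{-2}(-b)b^T = 0$ afforded by $b \in V(S)^{n-1}$. Strong invertibility of $L$ is then verified by factoring each leading principal submatrix as a block lower unitriangular matrix times a block diagonal matrix; the former is invertible because its off-diagonal block lies in $V(S)^{k-1}$, the latter because $\alpha$ and the leading principal submatrices of $L'$ are invertible.

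For uniqueness, suppose $LL^T = \tilde L\tilde L^T = M$ with both factors lower triangular and strongly invertible. Comparing $(1,1)$-entries gives $L_{11}^2=\tilde L_{11}^2 = a$; since both entries are invertible, $d_1:=\tilde L_{11}L_{11}^{-1}$ satisfies $d_1^2 = 1$. Comparing first columns yields $\tilde\ell = d_1\ell$, so $\tilde\ell\tilde\ell^T = d_1^2\ell\ell^T = \ell\ell^T$; cancelling this additively invertible matrix from the $(2,2)$-block equation gives $L'L'^T = \tilde L'\tilde L'^T = M/a$. The inductive hypothesis then supplies a diagonal $D' \in \mm_{n-1}(S)$ with $D'^2=I$ and $\tilde L' = L'D'$, so $D := \diag{d_1, D'}$ satisfies $D^2 = I$ and $\tilde L = LD$. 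The main obstacle is semiring-theoretic rather than combinatorial: the lack of subtraction and canonical square roots forces every cancellation to be routed through additive invertibility of $b$ and its scalar multiples (which is why Lemma \ref{tan} is essential), and it also makes the $D^2=I$ slack in the conclusion unavoidable, since an element $a \in Q(S) \cap U(S)$ need not have a unique invertible square root.
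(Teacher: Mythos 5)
Your existence argument is the paper's own: reduce to the Schur complement $M/a$ via Lemmas \ref{pd}, \ref{tan} and \ref{fromd}, extract an invertible square root $k$ of $a$, and assemble $L=\left[\begin{smallmatrix} k & 0\\ k^{-1}b & K\end{smallmatrix}\right]$; your extra remarks (symmetry of $M/a$ from commutativity, the explicit cancellation $a^{-1}bb^T+a^{-1}(-b)b^T=0$ in the $(2,2)$-block, and the factorization of leading principal submatrices to see strong invertibility) only fill in steps the paper leaves as ``straightforward.'' Where you genuinely diverge is uniqueness. The paper argues globally: from $LL^T=\tilde L\tilde L^T$ it forms $D=L^{-1}\tilde L=L^T(\tilde L^T)^{-1}=(\tilde L^{-1}L)^T$, concludes $D$ is simultaneously lower and upper triangular hence diagonal, and gets $D^2=I$ from $L=\tilde LD$ and $\tilde L=LD$. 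This is short but quietly relies on the inverse of an invertible lower triangular matrix over a semiring being lower triangular. Your route instead folds uniqueness into the same block induction: $d_1^2=\tilde L_{11}^2L_{11}^{-2}=aa^{-1}=1$, the first columns match up to $d_1$, and the additive invertibility of $\ell=\lambda^{-1}b$ (supplied by Lemma \ref{tan}) lets you cancel $\ell\ell^T$ from the $(2,2)$-block to recover $L'L'^T=\tilde L'\tilde L'^T=M/a$ and invoke the inductive hypothesis. This is longer but makes every cancellation explicit and avoids any claim about the shape of $L^{-1}$; the only cost is that your statement of uniqueness is among strongly invertible lower triangular factors, whereas the paper's computation formally covers all invertible lower triangular ones. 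Both arguments are correct.
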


\begin{proof}
  Suppose $M=\left[
 \begin{matrix}
 a &b^T\\
 b &C
 \end{matrix}
 \right]\in \mm_n(S)$, where $a \in S$, $b  \in S^{n-1}$ and $C \in \mm_{n-1}(S)$, is  strongly invertible 
 and has a nonnegative numerical range. We will prove the existence of $L$ by the induction on $n$. For $n=1$, the statement is clear. Suppose $n >1$.  Since $M$
 is strongly invertible, we have $a \in U(S)$. By Lemmas \ref{pd}, \ref{tan} and \ref{fromd}, the
 Schur complement $M/a \in \mm_{n-1}(S)$ exists, it is strongly invertible and has a nonnegative numerical range. 
 Thus by the induction hypothesis, there exists a strongly invertible  lower triangular matrix $K \in \mm_{n-1}(S)$ 
 such that $M/a=K K^T$. 
Moreover, Lemma \ref{pd} implies that $a$  has a nonnegative numerical range and thus by definition, there
exists $k \in S$ such that $a=k^2$.  Note that $a\in U(S)$ also implies $k\in U(S)$. If
 $L=\left[
 \begin{matrix}
 k &0\\
 k^{-1} b &K
 \end{matrix}
 \right]$, then $L L^T=M$. Since $K$ is a lower triangular strongly invertible  matrix, 
 it follows directly that $L$ is strongly invertible .
 
 Moreover, if $M=LL^T=\tilde{L}\tilde{L}^T$, where $L$ and $\tilde{L}^T$ are invertible  lower triangular matrices, then 
 $$L^{-1}\tilde{L}=L^{T}(\tilde{L}^{T})^{-1}=(\tilde{L}^{-1}L)^T.$$
 This implies that $L^{-1}\tilde{L}=(\tilde{L}^{-1}L)^T$ is a diagonal matrix. We denote  
 $D=L^{-1}\tilde{L}$ and so
 $L=\tilde{L}D$ and $\tilde{L}=LD$, where $D^2=I$.
 Obviously, for any diagonal matrix $D$ such that $D^2=I$, we have $(LD)(LD)^T=LL^T$.
\end{proof}

The next example shows that in general, the lower triangular matrix $L$ from Theorem \ref{thm:Ch} cannot be chosen to have a nonnegative numerical range.

\begin{example}
  Let $S=\ZZ_2[x]/(x^3)$ be a (semi)ring. 
	Observe that $Q(S)=\{0,1,x^2,1+x^2\}$.
	Choose $$M=\left[
 \begin{matrix}
  1&0\\
  0&1+x^2
 \end{matrix}
 \right] \in \mm_2(S).$$  Note that $M$ is strongly invertible.
 Since $1+x^2 \in Q(S)$ and $(a+b)^2=a^2+b^2$ for all $a, b \in S$, $M$ has a nonnegative numerical range.  
 One can check that for $$L=\left[
 \begin{matrix}
  1&0\\
  0&1+x
 \end{matrix}
 \right] \in \mm_2(S)$$
 we have $LL^T=M$, but $L$ does not have a nonnegative numerical range, since $1+x \notin Q(S)$. 
 Now, if $D$ is any diagonal matrix with $D^2=I$, then the diagonal entries of $D$ are either equal to $1$ or $1+x^2$.
 Since both of these two elements are from $Q(S)$ and $1+x \notin Q(S)$, the matrix $LD$ cannot have a nonnegative numerical range.
 \end{example}

The next corollary shows that in some  commutative semirings, we can obtain a characterization of positive semidefinite strongly invertible matrices by their Cholesky decomposition. 

\begin{corollary}\label{cor:Ch}
 Let $S$ be a commutative semiring such that $Q(S)+Q(S)\subseteq Q(S)$ and let $M \in \mm_n(S)$.  The following statements are equivalent.
 \begin{enumerate}
  \item\label{1} $M$ is a symmetric strongly invertible matrix with a  nonnegative numerical range.
  \item\label{2} $M=LL^T$ for a strongly invertible lower triangular matrix $L \in \mm_n(S)$.
  \item\label{3} $M$ is a symmetric strongly invertible positive semidefinite matrix.
 \end{enumerate}
 \end{corollary}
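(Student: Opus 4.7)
The plan is to close a short cycle of implications $(1)\Rightarrow(2)\Rightarrow(3)\Rightarrow(1)$, where the first implication is essentially Theorem~\ref{thm:Ch}, and the remaining two are direct calculations; the closure assumption $Q(S)+Q(S)\subseteq Q(S)$ is only needed in $(3)\Rightarrow(1)$.

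For $(1)\Rightarrow(2)$, I would simply invoke Theorem~\ref{thm:Ch}, which produces a strongly invertible lower triangular $L$ with $M=LL^T$ under precisely the hypotheses in (1).

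For $(2)\Rightarrow(3)$, suppose $M=LL^T$ with $L$ lower triangular and strongly invertible. Symmetry is immediate from $(LL^T)^T=LL^T$, and $M$ is positive semidefinite by the very definition with $B=L$. To verify strong invertibility of $M$, I would use the block form
$$L=\left[\begin{matrix} L_k & 0\\ A & B\end{matrix}\right]$$
for each $k$, where $L_k$ is the leading principal $k\times k$ submatrix of $L$; multiplying out shows that the leading principal $k\times k$ submatrix of $M$ is $L_kL_k^T$. Since $L_k\in U(\mm_k(S))$ gives $L_k^T\in U(\mm_k(S))$ with inverse $(L_k^{-1})^T$, the product $L_kL_k^T$ is invertible, so $M$ is strongly invertible.

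For $(3)\Rightarrow(1)$, write $M=BB^T$. For any $x\in S^n$, setting $y=B^Tx\in S^n$ we have
$$x^TMx=x^TBB^Tx=y^Ty=\sum_{i=1}^{n} y_i^2.$$
Each summand $y_i^2$ lies in $Q(S)$, and by the hypothesis $Q(S)+Q(S)\subseteq Q(S)$ a finite sum of squares again lies in $Q(S)$ (by a trivial induction on the number of summands). Hence $x^TMx\in Q(S)$ and $M$ has a nonnegative numerical range; symmetry and strong invertibility are already part of (3).

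The only nontrivial ingredient is the already-proved Theorem~\ref{thm:Ch}; the main subtlety worth stating explicitly is that strong invertibility of $M=LL^T$ in $(2)\Rightarrow(3)$ relies on the lower triangularity of $L$, so that the leading principal submatrices of $M$ factor as $L_kL_k^T$, and that the closure of $Q(S)$ under addition in $(3)\Rightarrow(1)$ is exactly what converts the sum-of-squares expression for $x^TMx$ into a single square.
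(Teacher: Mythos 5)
Your proposal is correct and follows essentially the same route as the paper: $(1)\Rightarrow(2)$ via Theorem \ref{thm:Ch}, $(2)\Rightarrow(3)$ by observing that the leading principal submatrices of $LL^T$ factor as $L_kL_k^T$ with $L_k$ invertible, and $(3)\Rightarrow(1)$ by writing $x^TMx=(B^Tx)^T(B^Tx)$ as a sum of squares and using $Q(S)+Q(S)\subseteq Q(S)$. Your write-up merely spells out the block-multiplication and induction details that the paper leaves implicit.
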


\begin{proof}
By Theorem \ref{thm:Ch}, we have that (\ref{1}) implies (\ref{2}). If $M=LL^T$, then every leading 
principal submatrix of $M$ is a product of the corresponding leading principal submatrix $K$ of $L$ 
with its transpose $K^T$. Moreover, if $L$ is strongly invertible, it follows that $M$ is strongly invertible and thus  (\ref{2}) implies (\ref{3}).
Assume now that $M=BB^T \in \mm_n(S)$ is strongly invertible. Then 
$$x^TMx=(B^Tx)^T(B^Tx) \in \sum Q(S) \subseteq Q(S)$$
and thus  (\ref{3}) implies (\ref{1}).
\end{proof}

%
%

  \section*{Acknowledgements}
	The authors acknowledge the financial support from the Slovenian Research Agency  (research core funding No. P1-0222).


\bibliographystyle{plain}
\bibliography{semiring}

\end{document}